\newtheorem{theorem}{Theorem}
\newtheorem{lemma}{Lemma}
\newtheorem{remark}{Remark}
\begin{document}

\title{\Large  On some properties of representation functions related to the\\
Erd\H{o}s-Tur\'{a}n conjecture}
\author{\large Csaba S\'andor$^{1}$\footnote{
Email:~csandor@math.bme.hu. This author was supported by the OTKA
Grant No. K109789. This paper was supported by the J\'anos Bolyai
Research Scholarship of the Hungarian Academy of Sciences.}
~~Quan-Hui Yang$^{2}$\footnote{Email:~yangquanhui01@163.com. This
author was supported by the National Natural Science Foundation
for Youth of China, Grant No. 11501299, the Natural Science
Foundation of Jiangsu Province, Grant Nos. BK20150889,~15KJB110014
and the Startup Foundation for Introducing Talent of NUIST, Grant
No. 2014r029.} }
\date{} \maketitle
 \vskip -3cm
\begin{center}
\vskip -1cm { \small 1. Institute of Mathematics, Budapest
University of Technology and Economics, H-1529 B.O. Box, Hungary}
 \end{center}

 \begin{center}
{ \small 2. School of Mathematics and Statistics, Nanjing University of Information \\
Science and Technology, Nanjing 210044, China }
 \end{center}

\begin{abstract} For a set $A\subseteq \mathbb{N}$ and $n\in \mathbb{N}$,
let $R_A(n)$ denote the number of ordered pairs $(a,a')\in A\times
A$ such that $a+a'=n$.  The celebrated Erd\H{o}s-Tur\'{a}n
conjecture says that, if $R_A(n)\ge 1$ for all sufficiently large
integers $n$, then the representation function $R_A(n)$ cannot be
bounded. For any positive integer $m$, Ruzsa's number $R_m$ is
defined to be the least positive integer $r$ such that there
exists a set $A\subseteq \mathbb{Z}_m$ with $1\le R_A(n)\le r$ for
all $n\in \mathbb{Z}_m$. In 2008, Chen proved that $R_{m}\le 288$
for all positive integers $m$. Recently the authors proved that
$R_m\ge 6$ for all integers $m\ge 36$. In this paper, for an
abelian group $G$, we prove that if $A\subseteq G$ satisfies
$R_A(g)\le 5$ for all $g\in G$, then $|\{g:g\in G, R_A(g)=0\}|\ge
\frac{1}{4}m-\sqrt{5m}$. This improves a recent result of Li and
Chen. We also give upper bounds of $|\{g:g\in G, R_A(g)=i\}|$ for
$i=2,4$.

{\it 2010 Mathematics Subject Classification:} Primary
11B34,11B13.

{\it Keywords and phrases:}  Representation function, Ruzsa's
number, Erd\H{o}s-Tur\'{a}n conjecture

\end{abstract}

\section{Introduction}

Let $G$ be an abelian group. For any set $A,B\subseteq G$, let
$$R_{A,B}(g)=\sharp \{(a,b):~ a\in A,~b\in B,~a+b=g\}. $$
Let $R_A(g)=R_{A,A}(g)$. If $A\subseteq \mathbb{N}$ and $R_A(n)\ge
1$ for all sufficiently large integers $n$, then we say that $A$
is a basis of $\mathbb{N}$. The celebrated Erd\H{o}s-Tur\'{a}n
conjecture \cite{erdosturan} states that if $A$ is a basis of
$\mathbb{N}$, then $R_A(n)$ cannot be bounded. Erd\H{o}s
\cite{erdos} proved that there exists a basis $A$ and two
constants $c_1,c_2> 0$ such that $c_1 \log n \leq R_A(n) \leq c_2
\log n$ for all sufficiently large integers $n$. Recently,
Dubickas \cite{Dubickas} gave the explicit values of $c_1$ and
$c_2$. In 2003, Nathanson \cite{nathanson} proved that the
Erd\H{o}s-Tur\'{a}n conjecture does not hold on $\mathbb{Z}$. In
fact, he proved that there exists a set $A\subseteq \mathbb{Z}$
such that $1\leq R_A(n) \leq 2$ for all integers $n$. In the same
year, Grekos et al. \cite{Grekos} proved that if $R_A(n)\geq 1$
for all $n$, then $\limsup_{n\rightarrow
 \infty}R_A(n)\geq 6.$ Later, Borwein et al. \cite{Borwein} improved 6 to
 8. In 2013, Konstantoulas \cite{Konstantoulas} proved that if the upper density
 $\overline{d}(\mathbb{N}\setminus (A+A))$ of the set of numbers
 not represented as sums of two numbers of $A$ is less than
 $1/10$, then $R_A(n)>5$ for infinitely many natural numbers $n$.
 Chen \cite{chen12} proved that there exists a basis $A$ of
 $\mathbb{N}$ such that the set of $n$ with $R_A(n)=2$ has density
 one. Later, the second author \cite{yang} and Tang \cite{tang15}
 generalized Chen's result. For the analogue of Erd\H{o}s-Tur\'an conjecture in groups, one can refer
 to \cite{Haddad},~\cite{Haddad08} and \cite{Konyagin}.

For a positive integer $m$, let $\mathbb{Z}_m$ be the set of
residue classes mod $m$. If $R_A(n)\ge 1$ for all $n\in
\mathbb{Z}_m$, then $A$ is called an additive basis of
$\mathbb{Z}_m$.

In 1990, Ruzsa \cite{ruzsa} found a basis $A$ of $\mathbb{N}$ for
which $R_A(n)$ is bounded in the square mean. Ruzsa's method
implies that there exists a constant $C$ such that for any
positive integer $m$, there exists an additive basis $A$ of
$\mathbb{Z}_m$ with $R_A(n)\le C$ for all $n\in \mathbb{Z}_m$. For
each positive integer $m$, Chen \cite{chen08} defined Ruzsa's
number $R_m$ to be the least positive integer $r$ such that there
exists an additive basis $A$ of $\mathbb{Z}_m$ with $R_A(n)\le r$
for all $n\in \mathbb{Z}_m$. In the same paper, Chen proved that
$R_m\le 288$ for all positive integer $m$ and $R_{2p^2}\le 48$ for
all primes $p$.

In 2016, the authors \cite{CsabaYang} proved that if $m\ge 36$,
then $R_m\ge 6$. That is, if $m\ge 36$ and $A\subseteq
\mathbb{Z}_m$ satisfies $R_A(n)\le 5$ for all integers $n$, then
there exists a $n_0\in \mathbb{Z}_m$ such that $R_A(n_0)=0$.
Recently, Li and Chen (see \cite[Corollary 1.3]{chenli}) gave a
quantitative version of this result.

{\noindent \bf Li \& Chen's Theorem}. Let $G$ be a finite abelian
group with $|G|=m$ and $A\subseteq G$. If $R_A(g)\le 5$ for all
$g\in G$, then
$$|\{g:g\in G, R_A(g)=0\}|\ge
\frac{7}{32}m-\frac{1}{2}\sqrt{10m}-1.$$

In this paper, we improve Li and Chen's theorem and also give an
example on the other hand. For convenience, for a fixed
nonnegative integer $i$, we denote the set $\{g:g\in G,
R_A(g)=i\}$ by $S_i$.

\begin{theorem}\label{thm11} (a) Let $G$ be a finite abelian
group with $|G|=m$ and $A\subseteq G$. If $R_A(g)\le 5$ for all
$g\in G$, then $|S_0|\ge \frac{1}{4}m-\sqrt{5m}$.

(b) Let $p$ be a prime and $m=2(p^2+p+1)$. Then there exists a
subset $A\subseteq \mathbb{Z}_m$ such that $R_A(n)\le 5$ for all
$n\in \mathbb{Z}_m$ and $|S_0|<\frac{3}{8}m$.
\end{theorem}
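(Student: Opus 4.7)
The plan is to combine the parity of $R_A$, the elementary bound $k^2\le 5(m-n_0)\le 5m$ coming from $R_A\le 5$, and a Plancherel estimate on $\sum_g R_A(g)^2$, all via a short LP-style argument. Let $k=|A|$ and put $n_i = |S_i|$ for $0\le i\le 5$, so that $\sum_i n_i = m$ and $\sum_i i\,n_i = \sum_{g\in G}R_A(g) = k^2$. First I would observe that $R_A(g)\equiv|\{a\in A:2a=g\}|\pmod 2$, so the odd values of $R_A$ are confined to $2A$, giving the parity constraint $n_1+n_3+n_5 \le |2A|\le k$. Combined with $\sum_i i\,n_i=k^2$, this yields
$$k^2 = \sum_{i\ge1}i\, n_i \ge 2(m-n_0)-(n_1+n_3+n_5)\ge 2(m-n_0)-k,$$
hence $m-n_0 \le (k^2+k)/2$. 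In the easy range $k\le\sqrt{3m/2}$ this already gives $m-n_0 \le 3m/4 + \sqrt{5m}$, i.e.\ $n_0\ge m/4-\sqrt{5m}$, and settles that case.

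For the hard range $\sqrt{3m/2}<k\le\sqrt{5m}$ I would use the pointwise identity, valid for every integer value $R_A(g)\in\{0,1,\dots,5\}$,
$$R_A(g)\bigl(5-R_A(g)\bigr) = 6 - 6\,\mathbf{1}_{R_A(g)\in\{0,5\}} - 2\,\mathbf{1}_{R_A(g)\in\{1,4\}},$$
which, after summing over $g\in G$ and substituting $\sum_g R_A(5-R_A) = 5k^2-\sum_g R_A^2$, produces the exact relation
$$6n_0 = 6m + \sum_g R_A(g)^2 - 5k^2 - 6n_5 - 2(n_1+n_4).$$
Here I would plug in the Fourier lower bound $\sum_g R_A(g)^2 \ge k^4/m + k^2(m-k)^2/\bigl(m(m-1)\bigr)$, obtained by Cauchy--Schwarz on the $m-1$ nontrivial characters of $G$ applied to $\widehat{1_A}$, and control $n_5$ by parity ($n_5\le k$) and $n_1+n_4$ through the identity $\sum_i i\,n_i=k^2$ together with parity ($n_1\le k-n_5$, so $n_1+n_4\le k+(k^2-5n_5)/4$). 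This reduces the estimate to an inequality depending only on $m$ and $k$, and optimizing in $k\in(\sqrt{3m/2},\sqrt{5m}]$ is designed to produce $n_0\ge m/4 - \sqrt{5m}$.

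The main obstacle is this optimization in the hard range: the brute-force bounds $n_5\le k$ and $n_4\le k^2/4$ only recover Li and Chen's $\tfrac{7}{32}m$, which is saturated near $k^2\approx 5m/2$. To push all the way to $\tfrac14 m$, I will need to use the parity constraint $n_1+n_3+n_5\le k$ \emph{jointly} with the equation $\sum_i i\,n_i = k^2$, so that any increase in $n_5$ (the quantity most useful for satisfying the Fourier lower bound on $\sum R_A^2$) forces a compensating decrease in $n_2+n_3$ through the identity above. Making this trade-off quantitative, with the residual linear-in-$k$ error controlled by $k\le\sqrt{5m}$, is the computational heart of the argument.
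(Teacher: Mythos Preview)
Your proposal for part~(a) has a genuine gap, and the obstacle you identify at the end is misdiagnosed. Your Plancherel/Cauchy--Schwarz lower bound
\[
\sum_g R_A(g)^2 \ \ge\ \frac{k^4}{m} + \frac{k^2(m-k)^2}{m(m-1)} \ =\ k^2 + \frac{k^2(k-1)^2}{m-1}
\]
is exactly the naive convexity bound, and when fed into the \emph{full} LP in the $n_i$'s (including parity and $\sum_i i\,n_i=k^2$), the minimum of $n_0$ is precisely $\tfrac{7}{32}m + O(\sqrt m)$: at $k^2\approx \tfrac{5}{2}m$ the feasible point $(n_0,n_2,n_4)\approx(\tfrac{7}{32}m,\tfrac{5}{16}m,\tfrac{15}{32}m)$ with all other $n_i=0$ satisfies every one of your constraints with the second-moment inequality tight. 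So no amount of ``joint'' use of the parity constraint and the first-moment identity can break through $\tfrac{7}{32}m$; the LP with your second-moment input simply has that value.

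What the paper does differently is to strengthen the second-moment lower bound itself, not the LP. The key observation is that each $R_{A,-A}(g)$ is a nonnegative \emph{integer}, so in the critical regime $2<\frac{k^2-k}{m-1}<3$ the Cauchy--Schwarz minimizer (all values equal to $\frac{k^2-k}{m-1}$) is not attainable; computing instead the integer-valued minimum of $\sum_{g\ne0}R_{A,-A}(g)^2$ subject to $\sum_{g\ne0}R_{A,-A}(g)=k^2-k$ gives the uniform bound $\sum_g(R_A(g)-3)^2 \ge 3m-5k+6$ (this is Lemma~\ref{lem31} with parameter $3$). On the upper-bound side the paper centers at $3$ rather than at $5/2$, using the pointwise inequality $(r-3)^2 \le 8\cdot\mathbf{1}_{r=0} + 3\cdot\mathbf{1}_{r\text{ odd}} + 1$ valid for $r\in\{0,1,\dots,5\}$ (the slack is just $4n_3\ge0$), which yields $\sum_g(R_A(g)-3)^2 \le 8n_0 + 3k + m$. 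Combining the two lines gives $n_0\ge \tfrac14 m - k + \tfrac34$ immediately, with no case split on $k$. Your proposal also does not touch part~(b).
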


If $R_A(g)\le 5$ for all $g\in G$, then by $|S_0|+|S_2|+|S_4|\le
m$ and Theorem \ref{thm11} (a), we see that $|S_2|+|S_4|\le
\frac{3}{4}m+\sqrt{5m}$. In the next two theorems, we give upper
bounds for $|S_2|$ and $|S_4|$ respectively.

\begin{theorem}\label{thm12} (a) Let $A\subseteq G$ satisfy
$R_A(g)\le 5$ for all $g\in G$. Then $|S_2|\le
\frac{1}{2}m+3\sqrt{5m}$.

(b) Let $p$  be a prime and $m=p^2+p+1$. Then there exists a
subset $A\subseteq \mathbb{Z}_m$ such that $R_A(n)\le 2$ for all
$n\in \mathbb{Z}_m$ and $|S_2|= \frac{1}{2}m-\frac{1}{2}$.
\end{theorem}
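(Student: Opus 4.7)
\medskip
\noindent\textbf{Proof plan for Theorem \ref{thm12}(a).}
Writing $r_i=|S_i|$ for $i=0,1,\ldots,5$, the plan combines three kinds of information. First, the moment identities $\sum_i r_i=m$ and $\sum_i i\,r_i=|A|^2$ are immediate. Second, the pointwise bound $R_A(g)\le 5$ gives $\sum_g R_A(g)^2=\sum_i i^2 r_i\le 5|A|^2$. Third, the Cauchy--Schwarz inequality on the support of $R_A$ gives $\sum_g R_A(g)^2\ge |A|^4/(m-r_0)$. Combining the last two yields $|A|^2\le 5(m-r_0)\le 5m$, hence $|A|\le\sqrt{5m}$.

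The key additional ingredient is the unordered pair identity
\[ r_2+r_3+2r_4+2r_5=\binom{|A|}{2}, \]
obtained by writing $R_A(g)=d(g)+2k(g)$, where $d(g)=|\{a\in A:2a=g\}|$ and $k(g)$ counts distinct unordered pairs $\{a,a'\}\subseteq A$ with $a+a'=g$, and then summing $k(g)=(R_A(g)-d(g))/2$ over $g\in G$. For $G$ of odd order, doubling is a bijection and the identity follows cleanly; for general $G$, one handles the $2$-torsion of $G$ by a short parity computation. Combined with the relation $r_1+r_3+r_5=|A|$ (valid in the odd-order case) and $\sum_i r_i=m$, this yields $r_0+r_2+r_4=m-|A|$, so $r_2\le m-|A|-r_0$.

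From here I would case-split on the size of $|A|$. If $|A|^2\le m+6\sqrt{5m}$, then $r_2\le\binom{|A|}{2}\le |A|^2/2\le m/2+3\sqrt{5m}$ immediately. If $|A|^2>m+6\sqrt{5m}$, I would combine the two upper bounds $r_2\le\binom{|A|}{2}$ and $r_2\le m-|A|-r_0$ with the derived identity
\[ r_3+r_4+2r_5=\binom{|A|+1}{2}+r_0-m \]
and the additive energy lower bound $\sum_g R_A(g)^2\ge|A|^4/(m-r_0)$. This forces $r_3+2r_4+2r_5$ to be large when $|A|^2$ is large, which via the pair identity pushes $r_2$ below $m/2+3\sqrt{5m}$.

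The main obstacle is extracting the sharp constant $1/2$, as opposed to the weaker $3/4$ one gets from a direct appeal to Theorem \ref{thm11}(a) via $r_2\le m-|A|-r_0$. This requires the full two-sided energy inequality $|A|^4/(m-r_0)\le\sum_g R_A(g)^2\le 5|A|^2$ together with the upper bound $r_5\le|A|$ (from the parity identity in the odd-order case), along with careful tracking of the $O(\sqrt{m})$ error terms. A secondary technical point is the extension from the odd-order case to general finite abelian $G$, handled by a parity analysis of the $2$-torsion subgroup of $G$.
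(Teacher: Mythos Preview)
Your plan has a genuine gap in Case~2. The paper's proof is short: it applies Lemma~\ref{lem31} with $k=2$ to get
\[
\sum_{g\in G}(R_A(g)-2)^2\ge 2m-3|A|+2,
\]
expands the left side as $4|S_0|+|S_1|+|S_3|+4|S_4|+9|S_5|\le 4(|S_0|+|S_4|)+9|A|$, and combines to obtain $|S_0|+|S_4|\ge\tfrac12 m-3\sqrt{5m}$, whence $|S_2|\le m-|S_0|-|S_4|\le\tfrac12 m+3\sqrt{5m}$. The entire argument is four lines once Lemma~\ref{lem31} is in hand.

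Your Case~1 ($|A|^2\le m+6\sqrt{5m}$) via $r_2\le\binom{|A|}{2}$ is correct and pleasant, but Case~2 cannot be closed with the tools you list. The crux is that your energy lower bound $\sum_g R_A(g)^2\ge |A|^4/(m-r_0)$ is too weak. Concretely, take $|A|=a$ with $a^2\approx 2m$ and consider the (real, nonnegative) profile
\[
r_0=0,\quad r_1=\tfrac{2a}{3},\quad r_2=\binom{a}{2}-\tfrac{a}{2},\quad r_3=\tfrac{a}{6},\quad r_4=0,\quad r_5=\tfrac{a}{6}.
\]
This satisfies $\sum r_i=m$, $r_1+r_3+r_5=a$, the pair identity $r_2+r_3+2r_4+2r_5=\binom{a}{2}$, the pointwise bound $\sum i^2 r_i\le 5a^2$, and your Cauchy--Schwarz bound $\sum i^2 r_i\ge a^4/m\approx 4m$ (both sides are $4m+O(\sqrt m)$). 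Yet $r_2\approx m-\sqrt{2m}$, far above $m/2$. So no combination of your listed constraints can force $r_2\le\tfrac12 m+O(\sqrt m)$ in this regime.

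What you are missing is precisely the refinement behind Lemma~\ref{lem31}: one passes to $R_{A,-A}$, uses $\sum_g R_A(g)^2=\sum_g R_{A,-A}(g)^2$, and separates the diagonal term $R_{A,-A}(0)=|A|$ before applying Cauchy--Schwarz (or the integer-minimisation argument) on $G\setminus\{0\}$. This yields the stronger bound $\sum_g R_A(g)^2\ge |A|^2+(|A|^2-|A|)^2/(m-1)$, which in the scenario above gives roughly $6m$ rather than $4m$ and does rule it out. Once you have that, the cleanest route is exactly the paper's: compute $\sum_g(R_A(g)-2)^2$ two ways. Your pair identity and case split become unnecessary. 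Also note you have not addressed part~(b); the paper handles it directly via a Singer difference set.
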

\begin{remark} The example in Theorem \ref{thm12} (b) shows that
Theorem \ref{thm12} (a) is nearly best possible.
\end{remark}

If $R_A(g)\le 5$ for all $g\in G$, by the statement before Theorem
\ref{thm12}, we have $|S_2|+|S_4|\le \frac{3}{4}m+\sqrt{5m}$, and
so $|S_4|\le \frac{3}{4}m+O(\sqrt{m})$. It seems difficult to
improve this upper bound. In the following, we will prove this
result by a weak condition $R_A(g)\le 7$ for all $g\in G$.

\begin{theorem}\label{thm13} (a) Let $A\subseteq G$ satisfy
$R_A(g)\le 7$ for all $g\in G$. Then $|S_4|\le
\frac{3}{4}m+O(\sqrt{m})$.

(b) Let $p$  be a prime and $m=2(p^2+p+1)$. Then there exists a
subset $A\subseteq \mathbb{Z}_m$ such that $R_A(n)\le 4$ for all
$n\in \mathbb{Z}_m$ and $|S_4|= \frac{1}{2}m-1$.
\end{theorem}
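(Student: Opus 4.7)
The plan for part (a) is to combine the easy upper bound $\sum_g R_A(g)^2\le 7|A|^2$ (from $R_A(g)(7-R_A(g))\ge 0$ summed over $g$) with a Parseval-plus-Cauchy--Schwarz lower bound on the same quantity. Setting $a=|A|$, Parseval applied to $\widehat{1_A}$ gives $\sum_{\chi\ne 1}|\widehat{1_A}(\chi)|^2=a(m-a)$, and Cauchy--Schwarz across the $m-1$ non-principal characters yields $\sum_{\chi\ne 1}|\widehat{1_A}(\chi)|^4\ge a^2(m-a)^2/(m-1)$. Since $\widehat{R_A}(\chi)=\widehat{1_A}(\chi)^2$, another application of Parseval rewrites this as
\[
\sum_g R_A(g)^2 \;\ge\; \frac{a^4}{m}+\frac{a^2(m-a)^2}{m(m-1)}.
\]

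I would then exploit that $R\mapsto R(7-R)$ is non-negative on $\{0,\dots,7\}$ and attains its maximum value $12$ at $R\in\{3,4\}$, which yields
\[
12|S_4| \;\le\; 12(|S_3|+|S_4|) \;\le\; \sum_g R_A(g)(7-R_A(g)) \;=\; 7a^2-\sum_g R_A(g)^2.
\]
Plugging the Fourier lower bound into this and writing $a^2=tm$ with $t\in[0,7]$, the right-hand side reduces to $t(6-t)\,m+O(\sqrt m)$, the error coming from $(m-a)^2/(m(m-1))=1-2\sqrt{t/m}+O(1/m)$. Since $t(6-t)$ attains its maximum $9$ on $[0,7]$ at $t=3$, the desired bound $|S_4|\le\tfrac{3}{4}m+O(\sqrt m)$ follows. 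The main technical point I expect to be delicate is keeping the $O(\sqrt m)$ uniform in $a$: the optimiser drifts by $O(1/\sqrt m)$ once the correction is included, and one should verify the bound is stable under that drift.

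For part (b), I would use a Singer perfect difference set $D\subseteq\mathbb{Z}_n$ with $n=p^2+p+1$ and $|D|=p+1$, in which every non-zero element has a unique representation as $d-d'$; in particular $D$ is a Sidon set and $R_D(g)\le 2$ for $g\ne 0$. Because $|D+D|\le(p+1)(p+2)/2<n$, one can choose $t\in\mathbb{Z}_n$ with $-2t\notin D+D$; then $D'=D+t$ satisfies $D'\cap(-D')=\varnothing$, so $R_{D'}(0)=0$ and $R_{D'}(g)\le 2$ throughout $\mathbb{Z}_n$. Since $n$ is odd, identify $\mathbb{Z}_m\cong\mathbb{Z}_2\times\mathbb{Z}_n$ and set $A=\{0,1\}\times D'$. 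A direct count (using that each element of $\mathbb{Z}_2$ admits exactly two factorisations from $\{0,1\}+\{0,1\}$) gives $R_A((\epsilon,g))=2R_{D'}(g)\in\{0,2,4\}$, so $R_A\le 4$ as required.

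To evaluate $|S_4|$, observe that $\sum_{g\in\mathbb{Z}_n}R_{D'}(g)=(p+1)^2$ and that $R_{D'}(g)$ is odd exactly for $g\in 2D'$, a set of $p+1$ elements (as $n$ is odd), each such $g$ contributing $R_{D'}(g)=1$ since $R_{D'}\le 2$. Therefore $|\{g:R_{D'}(g)=2\}|=\bigl((p+1)^2-(p+1)\bigr)/2=p(p+1)/2$, whence $|S_4|=2\cdot p(p+1)/2=p(p+1)=\tfrac{m}{2}-1$.
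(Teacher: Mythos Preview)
Your argument is correct in both parts, and the underlying ingredients coincide with the paper's, but the packaging differs in interesting ways.

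\textbf{Part (a).}  Your Parseval--Cauchy--Schwarz lower bound
\[
\sum_{g}R_A(g)^2\ \ge\ \frac{a^4}{m}+\frac{a^2(m-a)^2}{m(m-1)}
\]
is, after a short computation, \emph{identical} to the bound the paper obtains from its Lemma~3 (the paper writes it as $a^2+\frac{a^2(a-1)^2}{m-1}$ via the identity $\sum_g R_A(g)^2=\sum_g R_{A,-A}(g)^2$; the two expressions agree).  The genuine difference is in the extraction step.  The paper centres at $k=4$, expands $\sum_g(R_A(g)-4)^2$ in terms of the $|S_i|$, uses $|S_1|+|S_3|+|S_5|+|S_7|\le|A|$ to obtain $|S_4|\le 4|A|+3|S_0|+O(1)$, and then eliminates $|S_0|$ via the trivial $|S_0|+|S_4|\le m$.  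You instead use the pointwise inequality $R(7-R)\le 12$ (with equality at $R\in\{3,4\}$) to get $12|S_4|\le 7a^2-\sum_g R_A(g)^2$ directly, and then optimise over $t=a^2/m$.  Your route avoids the detour through $|S_0|$ and, if one tracks constants, yields a slightly smaller coefficient on the $\sqrt m$ term than the paper's explicit $\sqrt{7m}+\tfrac34$.  Your stated concern about uniformity is easily handled: since $t\in[0,7]$ is bounded, the error terms $2t^{3/2}\sqrt m+O(1)$ are uniformly $O(\sqrt m)$, and the perturbed optimiser $t=3+O(m^{-1/2})$ only shifts the main term by $O(1)$.

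\textbf{Part (b).}  Your construction $A=\{0,1\}\times D'$ in $\mathbb{Z}_2\times\mathbb{Z}_n$ is, via the CRT isomorphism $\mathbb{Z}_{2n}\cong\mathbb{Z}_2\times\mathbb{Z}_n$ (valid since $n=p^2+p+1$ is odd), exactly the paper's set $2A_p\cup(n+2A_p)$ with $D'$ playing the role of $2A_p$.  The translation you insert to force $R_{D'}(0)=0$ is correct but unnecessary: for any Sidon set $D$ in a group of odd order one has $R_D(0)=|D\cap(-D)|\le 2$ automatically (three elements in $D\cap(-D)$ would produce a repeated nonzero difference), and the paper silently relies on this.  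Your count of $|S_4|$ via $\sum_g R_{D'}(g)=(p+1)^2$ and the parity observation is a clean alternative to the paper's direct $2\binom{p+1}{2}$.
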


\section{Preliminary Lemmas}

\begin{lemma}(See \cite[Lemma 3]{CsabaYang}.) \label{lem32} Let $A\subseteq G$ and $c$ be a positive integer.
If $R_A(g)\le c$ for all $g\in G$, then $|A|\le \sqrt{cm}$.
\end{lemma}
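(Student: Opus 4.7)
The plan is to exploit the standard double-counting identity
$$\sum_{g\in G} R_A(g) \;=\; |A|^2,$$
which holds because every ordered pair $(a,a')\in A\times A$ contributes exactly once, namely to the term indexed by $g=a+a'$. This reduces the problem to a one-line estimate.

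First I would verify the identity by simply swapping the order of summation: the right-hand side counts ordered pairs in $A\times A$, and partitioning these pairs according to their sum gives the left-hand side by definition of $R_A$. Then, using the hypothesis $R_A(g)\le c$ for every $g\in G$ together with $|G|=m$, I would bound the sum term-by-term:
$$|A|^2 \;=\; \sum_{g\in G} R_A(g) \;\le\; \sum_{g\in G} c \;=\; cm.$$
Taking square roots gives $|A|\le\sqrt{cm}$, as required.

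There is essentially no obstacle here; the only thing to be careful about is that $R_A$ is defined for \emph{ordered} pairs, so the sum equals $|A|^2$ rather than $\binom{|A|+1}{2}$ or some variant. No appeal to Fourier analysis, character sums, or structural properties of $G$ is needed — the argument uses only the definition of $R_A$ and the cardinality of $G$.
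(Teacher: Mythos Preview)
Your argument is correct and is precisely the standard double-counting proof of this fact. The paper does not actually prove Lemma~\ref{lem32}; it merely cites it from \cite[Lemma~3]{CsabaYang}, so there is nothing to compare beyond noting that your one-line estimate is the expected proof.
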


\begin{lemma}(See \cite[Singer's Theorem]{Singer}.) \label{lem33} If $l$
is a prime power, then there exists $A\subseteq
\mathbb{Z}_{l^2+l+1}$ such that $R_{A,-A}(n)=1$ for all $n\in
\mathbb{Z}_{l^2+l+1},~n\not=\overline{0}$.
\end{lemma}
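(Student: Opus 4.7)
The plan is to realize $\mathbb{Z}_{l^2+l+1}$ as the multiplicative quotient $F^{\ast}/K^{\ast}$, where $F=\mathbb{F}_{l^3}$ and $K=\mathbb{F}_l\subseteq F$, and to take for $A$ the image in this quotient of the nonzero elements of a $2$-dimensional $K$-subspace of $F$. Concretely, I would fix a generator $\theta$ of the cyclic group $F^{\ast}$ of order $l^3-1=(l-1)(l^2+l+1)$, note that $\omega:=\theta^{l^2+l+1}$ generates $K^{\ast}$, and identify the residue $i\bmod(l^2+l+1)$ with the coset $\theta^iK^{\ast}$. Then I would choose any $2$-dimensional $K$-subspace $V\subseteq F$ (for instance $V=K\cdot 1+K\cdot\theta$) and set $A=\{\,i\bmod(l^2+l+1):\theta^i\in V\setminus\{0\}\,\}$. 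Since $|V\setminus\{0\}|=l^2-1$ and each $K^{\ast}$-coset contributes $l-1$ elements, $|A|=l+1$.

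Next I would translate the target equality $R_{A,-A}(n)=1$ into a statement about subspace intersections. By definition $R_{A,-A}(n)$ counts the ordered pairs $(a,a')\in A\times A$ with $a-a'\equiv n$, equivalently the number of $a'\in A$ such that also $a'+n\in A$. The conditions $a'\in A$ and $a'+n\in A$ translate respectively to $\theta^{a'}\in V$ and $\theta^{a'}\in\theta^{-n}V$, so
\[
R_{A,-A}(n)\;=\;\#\bigl\{\,K^{\ast}\text{-cosets contained in }(V\cap\theta^{-n}V)\setminus\{0\}\,\bigr\}.
\]
Since $W:=\theta^{-n}V$ is again a $2$-dimensional $K$-subspace of $F$, it suffices to show that $V\cap W$ is exactly $1$-dimensional for every $n\not\equiv 0\pmod{l^2+l+1}$, because a $1$-dimensional $K$-subspace contributes precisely one $K^{\ast}$-coset of nonzero elements.

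The main point, and essentially the only step requiring a genuine idea, is to rule out $W=V$ for $n\not\equiv 0$. Suppose $\theta^{-n}V=V$. Then $V$ is stable under multiplication by $\theta^{-n}$, and by iteration under the whole subring $K[\theta^{-n}]\subseteq F$, which is a subfield of $F$ because $F$ is a field. Since $[F:K]=3$ is prime, this subfield is either $K$ or $F$. If it equals $K$, then $\theta^{-n}\in K^{\ast}=\langle\omega\rangle$, forcing $n\equiv 0\pmod{l^2+l+1}$, contrary to hypothesis. If it equals $F$, then $V$ is stable under multiplication by every element of $F$, so $V$ is an $F$-vector subspace of $F$; but the $K$-dimension of any $F$-subspace of $F$ is a multiple of $[F:K]=3$, contradicting $\dim_K V=2$. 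Hence $W\ne V$, so $\dim_K(V+W)=3$ and the dimension formula gives $\dim_K(V\cap W)=2+2-3=1$, as required.

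The only obstacle is the subfield dichotomy just used to exclude $\theta^{-n}V=V$; the rest is dimension counting together with the standard bijection $\mathbb{Z}_{l^2+l+1}\leftrightarrow F^{\ast}/K^{\ast}$. The prime-power hypothesis on $l$ enters only through the existence of $\mathbb{F}_{l^3}$, and the primality of $[F:K]=3$ enters exactly to guarantee that $F/K$ has no proper intermediate subfield.
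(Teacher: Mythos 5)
Your argument is correct and complete: the paper gives no proof of this lemma, citing Singer's 1938 theorem, and your construction (the image of a $2$-dimensional $\mathbb{F}_l$-subspace of $\mathbb{F}_{l^3}$ in the quotient $\mathbb{F}_{l^3}^{\ast}/\mathbb{F}_l^{\ast}\cong\mathbb{Z}_{l^2+l+1}$, with the subfield dichotomy from the primality of $[\mathbb{F}_{l^3}:\mathbb{F}_l]=3$ ruling out $\theta^{-n}V=V$) is precisely the classical Singer difference-set construction that the citation refers to. All the steps, including the dimension count $\dim_K(V\cap W)=1$ and the identification of residues with $K^{\ast}$-cosets, check out.
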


\begin{lemma}\label{lem31} If $A$ is a subset of $G$, then for any positive integer $k$ we have
$$\sum_{g\in G}(R_A(g)-k)^2\ge km-(2k-1)|A|+k^2-k.$$
\end{lemma}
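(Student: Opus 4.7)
The plan is to convert $\sum_{g}R_A(g)^2$ into a sum over differences via the standard additive-energy identity, peel off the diagonal contribution, and then finish with a single elementary pointwise quadratic inequality.

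For the first step, I would define $f\colon G\to\mathbb{Z}_{\ge 0}$ by $f(x)=|A\cap(A+x)|$, so that $f(x)$ counts the ordered pairs $(a,b)\in A\times A$ with $a-b=x$. Counting quadruples $(a_1,a_2,a_3,a_4)\in A^4$ satisfying $a_1+a_2=a_3+a_4$, equivalently $a_1-a_3=a_4-a_2$, in two ways yields the identity
\[
\sum_{g\in G}R_A(g)^2 \;=\; \sum_{x\in G}f(x)^2.
\]
Since $f(0)=|A|$ and $\sum_{x\in G}f(x)=|A|^2$, this splits as
\[
\sum_{g\in G}R_A(g)^2 \;=\; |A|^2 + \sum_{x\ne 0}f(x)^2, \qquad \sum_{x\ne 0}f(x)=|A|^2-|A|.
\]

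Next I would expand $\sum_{g}(R_A(g)-k)^2=\sum_{g}R_A(g)^2-2k|A|^2+k^2 m$, so the lemma reduces to showing
\[
\sum_{x\ne 0}f(x)^2 \;\ge\; (2k-1)(|A|^2-|A|) - k(k-1)(m-1).
\]
This is immediate from the pointwise inequality $y^2\ge (2k-1)y-k(k-1)$, which holds for every integer $y\ge 0$ since it rearranges to $(y-k)(y-(k-1))\ge 0$, a product of two consecutive integers. Applying it with $y=f(x)$ for each of the $m-1$ nonzero $x\in G$ and summing gives exactly the required bound; substituting back produces $km-(2k-1)|A|+k(k-1)$, as claimed.

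The only real subtlety — and the step one must not get wrong — is the decision to apply the pointwise bound only to the $m-1$ nonzero $x$ while retaining the diagonal value $f(0)^2=|A|^2$ exactly. If one applied the same bound to all of $G$, one would obtain merely $\sum_{g}R_A(g)^2\ge (2k-1)|A|^2-k(k-1)m$, which rearranges to $\sum_{g}(R_A(g)-k)^2\ge km-|A|^2$; this is strictly weaker than the lemma whenever $|A|\notin\{k-1,k\}$. Preserving the diagonal contribution exactly, rather than lower-bounding it by the line $(2k-1)|A|-k(k-1)$, recovers precisely the missing slack.
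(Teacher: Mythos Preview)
Your proof is correct and arrives at the same bound, but by a cleaner route than the paper's. After the common opening move---the additive-energy identity $\sum_g R_A(g)^2=\sum_x f(x)^2$ and the separation of the diagonal $f(0)=|A|$---the paper proceeds by first applying Cauchy--Schwarz to $\sum_{x\ne 0}f(x)^2$ and completing the square, which handles the case $\frac{|A|^2-|A|}{m-1}\notin(k-1,k)$; for the remaining range it sets up a discrete optimisation problem (minimise $\sum k_i^2$ over nonnegative integers with prescribed sum) and computes the minimum explicitly. Your single pointwise inequality $(y-k)(y-(k-1))\ge 0$ for integer $y$ replaces both cases at once: it is exactly the integrality fact that underlies the paper's second case, and it automatically dominates the real-variable Cauchy--Schwarz bound used in the first. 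The gain is that you avoid any case distinction and any appeal to convexity or Cauchy--Schwarz; the paper's argument, on the other hand, makes transparent that equality is governed by how evenly the differences distribute over $G\setminus\{0\}$.
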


\begin{proof} We use Lev and S\' ark\" ozy's argument (see \cite{levsarkozy}) in the following.
\begin{eqnarray*}\sum_{g\in G}(R_A(g)-k)^2&=&\sum_{g\in G}R_A(g)^2-2k\sum_{g\in G}R_A(g)+k^2m\\
&=&\sum_{g\in G}R_{A,-A}(g)^2-2k|A|^2+k^2m\\
&=&\sum_{g\in G\setminus \{0\}}R_{A,-A}(g)^2-(2k-1)|A|^2+k^2m\\
&\ge &
\frac{1}{m-1}\left(\sum_{g\in G\setminus \{0\} }R_{A,-A}(g)\right)^2-(2k-1)|A|^2+k^2m\\
&=&\frac{(|A|^2-|A|)^2}{m-1}-(2k-1)(|A|^2-|A|)-(2k-1)|A|+k^2m\\
&=&(m-1)\left(\bigg(\frac{|A|^2-|A|}{m-1}-\Big(k-\frac{1}{2}\Big)\bigg)^2+k-\frac{1}{4}\right)-(2k-1)|A|+k^2.
\end{eqnarray*}

If $\frac{|A|^2-|A|}{m-1}\ge k$ or $\frac{|A|^2-|A|}{m-1}\le k-1$,
then we have
\begin{eqnarray*}\sum_{g\in G}(R_A(g)-k)^2\ge
k(m-1)-(2k-1)|A|+k^2=km-(2k-1)|A|+k^2-k,
\end{eqnarray*}
and the result is true.

If $k-1<\frac{|A|^2-|A|}{m-1}<k$, then
$$\sum_{g\in G\setminus \{0\}}R_{A,-A}(g)^2\ge
\min_{\substack{k_1,k_2,\ldots,k_{m-1}\in \mathbb{N}\\
\sum_{i=1}^{m-1}k_i=|A|^2-|A|}}\sum_{i=1}^{m-1}k_i^2.$$ It is
known that if $\sum_{i=1}^{m-1}k_i$ is fixed, where $k_i\in
\mathbb{N}$, then $\sum_{i=1}^{m-1}k_i^2$ gets the minimal value
when $|k_i-k_j|\le 1$ for all $1\le i,j\le m-1$. Let
$|A|^2-|A|=q(m-1)+r$, where $q,r$ are nonnegative integers and
$0\le r<m-1$. Then
$q=\left\lfloor\frac{|A|^2-|A|}{m-1}\right\rfloor$ and
$r=\left\{\frac{|A|^2-|A|}{m-1}\right\}(m-1)$. Hence
\begin{eqnarray*}\sum_{g\in G\setminus \{0\}}R_{A,-A}(g)^2&\ge &
\min_{\substack{k_1,k_2,\ldots,k_{m-1}\in \mathbb{N}\\
\sum_{i=1}^{m-1}k_i=|A|^2-|A|}}\sum_{i=1}^{m-1}k_i^2=
rk+(m-1-r)(k-1)\\
&=&\left\{\frac{|A|^2-|A|}{m-1}\right\}(m-1)k^2+\left(1-\left\{\frac{|A|^2-|A|}{m-1}\right\}\right)(m-1)(k-1)^2\\
&=&(k-1)^2(m-1)+(2k-1)\left\{\frac{|A|^2-|A|}{m-1}\right\}(m-1).\end{eqnarray*}
Therefore,
\begin{eqnarray*}\sum_{g\in G}(R_A(g)-k)^2&=&\sum_{g\in G\setminus \{0\}}R_{A,-A}(g)^2-(2k-1)|A|^2+k^2m\\
&\ge
&(k-1)^2(m-1)+(2k-1)\left\{\frac{|A|^2-|A|}{m-1}\right\}(m-1)\\
&&-(2k-1)(|A|^2-|A|)-(2k-1)|A|+k^2m\\
&=&(k-1)^2(m-1)-(2k-1)(m-1)(k-1)-(2k-1)|A|+k^2m\\
&=&km-(2k-1)|A|+k^2-k.
\end{eqnarray*}
\end{proof}

\section{Proofs}

\begin{proof}[Proof of Theorem \ref{thm11}] Let $A$ be a given subset of $G$ such
that $R_A(g)\le 5$ for all $g\in G$. Then
\begin{eqnarray*}\sum_{g\in G}(R_A(g)-3)^2&=&9|S_0|+4|S_1|+|S_2|+|S_4|+4|S_5|\\
&\le &
8|S_0|+3(|S_1|+|S_3|+|S_5|)+(|S_0|+|S_1|+|S_2|+|S_3|+|S_4|+|S_5|).
\end{eqnarray*}
It is clear that $$|S_0|+|S_1|+|S_2|+|S_3|+|S_4|+|S_5|=|\{g:~g\in
G,0\le R_A(g)\le 5\}|=m,$$
$$|S_1|+|S_3|+|S_5|=|\{g:g\in G,2\nmid
R_A(g)\}|=|\{2a:a\in A\}|\le |A|.$$ Hence we have
\begin{eqnarray}\label{eq21}\sum_{g\in G}(R_A(g)-3)^2\le
8|S_0|+3|A|+m.
\end{eqnarray}
On the other hand, by Lemma \ref{lem31}, taking $k=3$, we have
\begin{eqnarray}\label{eq22}\sum_{g\in G}(R_A(g)-3)^2\ge
3m-5|A|+6.\end{eqnarray} Therefore, by \eqref{eq21},\eqref{eq22}
and Lemma \ref{lem32}, it follows that
$$|S_0|\ge \frac{1}{4}m-|A|+\frac{3}{4}\ge
\frac{1}{4}m-\sqrt{5m}.$$

Now we prove part (b). Let $p$ be a prime number and
$m=2(p^2+p+1)$. By Lemma \ref{lem33}, there is a set $B\subseteq
\mathbb{Z}_{p^2+p+1}$ such that $R_{B,-B}(n)=1$ for all $n\in
\mathbb{Z}_{p^2+p+1}$ and $n\not=\overline{0}$. Then for any
integer $l$ with $0\le l\le p^2+p$, we define
$$A_l=2B\cup (2B+2l+1)~\text{mod}~m,\quad
\text{where}~2B=\{2b:~b\in B\}.$$ Now we first prove that
$R_{A_l}(n)\le 4$ for all $n\in \mathbb{Z}_m$.

If $2\mid n$, then $n=a_1+a_2$ with $a_1,a_2\in 2B$ or $a_1,a_2\in
2B+2l+1$. Hence
$R_{A_l}(n)=R_B(\frac{n}{2})+R_B(\frac{n}{2}-(2l+1))\le 2+2=4$.

If $2\nmid n$, then $n=a_1+a_2$ with $a_1\in 2B,a_2\in 2B+2l+1$ or
$a_1\in 2B+2l+1,a_2\in 2B$. Hence,
$R_{A_l}(n)=R_B(\frac{n-2l-1}{2})\times 2\le 4$.

Therefore, $R_{A_l}(n)\le 4$ for all $n\in \mathbb{Z}_m$.

Let $P$ be a statement and we define
$$I(P)=\left\{\begin{array}{l l}
1,& \text{if the statement}~P~\text{is true};\\
0,& \text{if the statement}~P~\text{is false}.\end{array}\right.$$

Let
$$X_{\text{odd}}^l=\{2k+1:~2k+1\in \mathbb{Z}_m~\text{and}~R_{A_l}(2k+1)=0\},$$
$$X_{\text{even}}^l=\{2k:~2k\in \mathbb{Z}_m~\text{and}~R_{A_l}(2k)=0\}.$$

Then $S_0=X_{\text{odd}}^l\cup X_{\text{even}}^l$. It is clear
that $R_{A_l}(2n+1)=0$ if and only if $R_B(n-l)=0$. Then
\begin{eqnarray*}|X_{odd}^l|&=&p^2+p+1-\sharp\{n:n\in
\mathbb{Z}_{p^2+p+1},R_B(n)=2\}-\sharp\{n:n\in
\mathbb{Z}_{p^2+p+1},R_B(n)=1\}\\
&=&p^2+p+1-{p+1\choose
2}-(p+1)=\frac{1}{2}p^2-\frac{1}{2}p<\frac{1}{4}m.\end{eqnarray*}

 and
\begin{eqnarray*}\sum_{l=0}^{p^2+p}|X_{\text{even}}^l|&=&\sum_{l=0}^{p^2+p}|\{n:~n\in
\mathbb{Z}_{p^2+p+1},R_B(n)=0~\text{and}~R_B(n-2l-1)=0\}|\\
&=&\sum_{l=0}^{p^2+p}\sum_{n=0}^{p^2+p}I(R_B(n)=0~\text{and}~R_B(n-2l-1)=0)\\
&=&\sum_{l=0}^{p^2+p}\sum_{n=0}^{p^2+p}I(R_B(n)=0)I(R_B(n-2l-1)=0)\\
&=&\sum_{n=0}^{p^2+p}I(R_B(n)=0)\sum_{l=0}^{p^2+p}I(R_B(n-2l-1)=0)\\
&=&\left(\frac{p^2}{2}-\frac{p}{2}\right)\sum_{n=0}^{p^2+p}I(R_B(n)=0)=\left(\frac{p^2}{2}-\frac{p}{2}\right)^2.
\end{eqnarray*}
Hence there is an integer $l$ such that
$$|X_{even}^l|\le \frac{1}{4}\cdot \frac{(p^2-p)^2}{p^2+p+1}<
\frac{1}{4}(p^2+p+1)=\frac{1}{8}m.$$ Therefore, for this integer
$l$,
$$|S_0|=|X_{odd}^l|+|X_{even}^l|<
\frac{1}{4}m+\frac{1}{8}m=\frac{3}{8}m.$$
\end{proof}

\begin{proof}[Proof of Theorem \ref{thm12}] By Lemma \ref{lem31}, taking $k=2$, we have
\begin{eqnarray}\label{eq41}\sum_{g\in G}(R_A(g)-2)^2\ge 2m-3|A|+2.
\end{eqnarray}
On the other hand,
\begin{eqnarray}\label{eq42}\nonumber\sum_{g\in G}(R_A(g)-2)^2&=&4|S_0|+|S_1|+|S_3|+4|S_4|+9|S_5|\\
&\le& 4(|S_0|+|S_4|)+9(|S_1|+|S_3|+|S_5|)\\
\nonumber&\le & 4(|S_0|+|S_4|)+9|A|.
\end{eqnarray}
Hence, by \eqref{eq41} and \eqref{eq42}, we have $|S_0|+|S_4|\ge
\frac{1}{2}m-3|A|+\frac{1}{2}\ge \frac{1}{2}m-3\sqrt{5m}$. Since
$$|S_0|+|S_1|+|S_2|+|S_3|+|S_4|+|S_5|=m,$$ it follows that
$$|S_2|\le \frac{1}{2}m+3\sqrt{5m}.$$

Now we prove the part (b). By Lemma \ref{lem33}, there exists a
subset $A\subseteq \mathbb{Z}_m$ such that $R_{A,-A}(n)=1$ for all
$n\in \mathbb{Z}_m$, $n\not=\overline{0}$. It is easy to see that
$|A|=p+1$ and $R_A(n)\le 2$ for all $n\in \mathbb{Z}_m$. Hence
$$|S_2|={|A|\choose 2}=\frac{1}{2}(p^2+p+1)-\frac{1}{2}=\frac{1}{2}m-\frac{1}{2}.$$

\end{proof}

\begin{proof}[Proof of Theorem \ref{thm13}] By Lemma \ref{lem31}, taking $k=4$, we have
$$\sum_{g\in G} (R_{A}(g)-4)^2\ge 4m-7|A|+12.$$

On the other hand, by $|S_1|+|S_3|+|S_5|+|S_7|\le |A|$, we have
\begin{eqnarray*}\sum_{g\in G}(R_A(g)-4)^2&=&16|S_0|+9|S_1|+4|S_2|+|S_3|+|S_5|+4|S_6|+9|S_7|\\
&\le &4(|S_0| +|S_2|+|S_4|+|S_6|)+9|A|+12|S_0|-4|S_4|\\
&\le& 4m+9|A|+12|S_0|-4|S_4|.\end{eqnarray*} Hence $|S_4|\le
4|A|+3|S_0|+3$. Since $\sum_{i=0}^{7}|S_i|=m$, it follows that
$$m\ge |S_0|+|S_4|\ge \frac{|S_4|-4|A|-3}{3}+|S_4|=\frac{4}{3}|S_4|-\frac{4}{3}|A|-1.$$
By Lemma \ref{lem32}, we have
$$|S_4|\le \frac{3}{4}m+|A|+\frac{3}{4}\le
\frac{3}{4}m+\sqrt{7m}+\frac{3}{4}.$$

Now we prove the part (b). Let $p$ be a prime and $m=2(p^2+p+1)$.
By Lemma \ref{lem33}, there exists a subset $A_p\subseteq
\mathbb{Z}_{p^2+p+1}$ such that $R_{A_p,-A_p}(n)=1$ for all
$n\not=\overline{0}$. Let $A=2A_p\cup (p^2+p+1+2A_p)\subseteq
\mathbb{Z}_m$.

If $2\mid n$, then
$R_A(n)=R_{A_p}(\frac{n}{2})+R_{A_p}(\frac{n}{2})\le 2+2=4$. If
$2\nmid n$, then $R_A(n)=2R_{A_p}(\frac{n-(p^2+p+1)}{2})\le 4$.
Hence $R_A(n)\le 4$ for all $n\in \mathbb{Z}_m$.
\begin{eqnarray*}|S_4|&=&|\{n:~2\mid n,n\in \mathbb{Z}_m~\text{and}~R_{A_p}(\frac{n}{2})=2\}|\\
&+&|\{n:~2\nmid n,~n\in \mathbb{Z}_m~\text{and}~R_{A_p}(\frac{n-(p^2+p+1)}{2})=2\}|\\
&=&|\{n:n\in
\mathbb{Z}_{p^2+p+1}~\text{and}~R_{A_p}(n)=2\}|+|\{n:~n\in \mathbb{Z}_{p^2+p+1}~\text{and}~R_{A_p}(n-\frac{p^2+p}{2})=2\}|\\
&= &2{p+1\choose 2}=p^2+p=\frac{1}{2}m-1.
\end{eqnarray*}
\end{proof}

\section{Acknowledgement} This work was done during the second
author visiting to Budapest University of Technology and
Economics. He would like to thank Dr. S\'andor Kiss and Dr. Csaba
S\'{a}ndor for their warm hospitality.

\end{document}